\documentclass[12pt]{article}
\usepackage[latin1]{inputenc}
% Copyable pdf
\usepackage{cmap}
% Font
\usepackage{lmodern}

\usepackage{amssymb, amsmath, amsthm}
\usepackage[a4paper,top=25mm,bottom=25mm,left=25mm,right=25mm]{geometry}
\usepackage{etex}

\usepackage{authblk} % for headings
\usepackage{pifont}
\usepackage{graphicx}
\usepackage[usenames,dvipsnames,svgnames,table]{xcolor}
\usepackage[figuresright]{rotating}
\usepackage{xtab} % tackle the long tables
\usepackage{longtable} % tackle the long tables
\usepackage{multirow}
\usepackage{footnote}
\usepackage[stable]{footmisc}
\usepackage{chngpage} % allows for temporary adjustment of side margins
\usepackage{pdflscape} % landscape environment

\usepackage{pgfplots}
\pgfplotsset{compat=1.14}
\usepackage{setspace}

\usepackage{array}
\newcolumntype{K}[1]{>{\centering\arraybackslash$}p{#1}<{$}}

\makesavenoteenv{tabular}
\usepackage{tabularx}
\usepackage{booktabs}
\usepackage{threeparttable}
\usepackage[referable]{threeparttablex} % footnotes in tabu
\newcolumntype{R}{>{\raggedleft\arraybackslash}X}
\newcolumntype{L}{>{\raggedright\arraybackslash}X}
\newcolumntype{C}{>{\centering\arraybackslash}X}
\newcolumntype{A}{>{\columncolor{gray!25}}C}
\newcolumntype{a}{>{\columncolor{gray!25}}c}

\usepackage{dcolumn} % alignment to decimal points
\newcolumntype{.}{D{.}{.}{-1}}

\usepackage{tikz}
\usetikzlibrary{arrows}
\usepackage[semicolon]{natbib} % ,numbers
\usepackage[hyphens]{url}
\usepackage{hyperref} % [hidelinks]
\hypersetup{
  colorlinks   = true,    % Colours links instead of ugly boxes
  urlcolor     = blue,    % Colour for external hyperlinks
  linkcolor    = blue,    % Colour of internal links
  citecolor    = red      % Colour of citations
}
\usepackage{microtype}
\usepackage[justification=centerfirst]{caption}

% Captions of subtables and subfigures
\usepackage[labelformat=simple]{subcaption}

\DeclareCaptionLabelFormat{parenthesis}{(#2)}
\captionsetup[subfigure]{labelformat=parenthesis,font+=small,list=false}
\makeatletter
\renewcommand\p@subfigure{\arabic{figure}.}
\makeatother

\DeclareCaptionLabelFormat{parenthesis}{(#2)}
\captionsetup[subtable]{labelformat=parenthesis,font+=small,list=false}
\makeatletter
\renewcommand\p@subtable{A.\arabic{table}.}
\makeatother

\usepackage{enumitem}

% felsorolasok behuzasa
\setlist[itemize]{leftmargin=3\parindent}
\setlist[enumerate]{leftmargin=2\parindent}

\theoremstyle{plain}

\newtheorem{corollary}{Corollary}[section]
\newtheorem{lemma}{Lemma}[section]
\newtheorem{proposition}{Proposition}[section]

\theoremstyle{definition}
\newtheorem{axiom}{Axiom}%[section]

\newtheorem{definition}{Definition}[section]

\theoremstyle{remark}

% Sakk elemzeshez kell

\def\keywords{\vspace{.5em} % Add keywords
{\noindent \textit{Keywords}:\,}}

\def\JEL{\vspace{.5em} % Add keywords
{\noindent \textbf{\emph{JEL} classification number}:\,}}

\def\AMS{\vspace{.5em} % Add keywords
{\noindent \textbf{\emph{MSC} classes}:\,}}

\author{\href{http://scholar.google.hu/citations?user=xje88NkAAAAJ&}{L\'aszl\'o Csat\'o}\thanks{~e-mail: laszlo.csato@uni-corvinus.hu} }
\affil{Institute for Computer Science and Control, Hungarian Academy of Sciences (MTA SZTAKI) \\
Laboratory on Engineering and Management Intelligence, Research Group of Operations Research and Decision Systems}
\affil{Corvinus University of Budapest (BCE) \\
Department of Operations Research and Actuarial Sciences}
\affil{Budapest, Hungary}
\title{Eigenvector Method and rank reversal in group decision making revisited}
%\thanks{~We are grateful to S\'andor Boz\'oki for reading the manuscript and for useful advices. \newline
%The research was supported by OTKA grants K 111797 and NK 105645.}}
\date{\today}

\begin{document}

\maketitle

\begin{abstract}
It has been shown recently that the Eigenvector Method may lead to strong rank reversal in group decision making, that is, the alternative with the highest priority according to all individual vectors may lose its position when evaluations are derived from the aggregated group comparison matrix.
We give a minimal counterexample and prove that this negative result is a consequence of the difference of the rankings induced by the right and inverse left eigenvectors.

\JEL{C44, D71}

\AMS{90B50, 91B08}

\keywords{Preference aggregation; pairwise comparison matrix; axiomatic approach; rank reversal; Eigenvector Method}
\end{abstract}

\section{Introduction}

\citet{PerezMokotoff2016} have presented an example that the alternative with the highest priority according to all individual vectors is not necessarily the best on the basis of aggregated group comparison matrix if Eigenvector Method is used for deriving priorities.
The current paper gives an axiomatic discussion with a focus on this property called group-coherence for choice. Our main contributions are:
(1) we reveal that the violation of group-coherence for choice by the Eigenvector Method is rooted in right-left asymmetry, that is, the difference of the rankings induced by the right and inverse left eigenvectors;
(2) we provide a minimal example (in the number of alternatives), which is an improvement on the proof of \citet[Thereom~3.2~(b)]{PerezMokotoff2016}.

The axioms introduced below are strongly connected to rank reversal, a well-known phenomenon that has been extensively studied in the AHP literature \citep{SaatyVargas1984b, BarzilaiGolany1994, Schenkerman1994, WangElhag2006, WangLuo2009, MalekiZahir2013, Hou2016, Koczkodajetal2016}.

%Similarly, we think a significant shortcoming of any weighting method can be the existence of two alternatives $i$ and $j$ such that all individuals (weakly) prefer $i$ to $j$, but $i$ is not (weakly) preferred to $j$ on the basis of aggregated pairwise comparison matrix.
%The current paper gives an axiomatic discussion of this property and proves that its violation by the Eigenvector Method is rooted in left-right asymmetry, that is, the difference of left and right eigenvectors.

\section{Ranking from pairwise comparison matrices} \label{Sec2}

Assume that $n$ alternatives should be evaluated with respect to a given criterion.
Their pairwise comparisons are known such that $a_{ij}$ is an assessment of the relative importance of alternative $i$ with respect to alternative $j$.

Let $\mathbb{R}^{n}_+$ and $\mathbb{R}^{n \times n}_+$ denote the set of positive (with all elements greater than zero) vectors of size $n$ and matrices of size $n \times n$, respectively.

\begin{definition} \label{Def21}
\emph{Pairwise comparison matrix}:
Matrix $\mathbf{A} = \left[ a_{ij} \right] \in \mathbb{R}^{n \times n}_+$ is a \emph{pairwise comparison matrix} if $a_{ji} = 1/a_{ij}$ for all $1 \leq i,j \leq n$.
\end{definition}

The set of all pairwise comparison matrices of size $n \times n$ is denoted by $\mathcal{A}^{n \times n}$.
%A pairwise comparison matrix is well-defined by its elements above the diagonal.

Let $\mathbf{1} \in \mathcal{A}^{n \times n}$ be the pairwise comparison matrix with all elements equal to $1$.
% of size $n \times n$

The set $\mathcal{A}^{n \times n}$ is closed under some modifications of pairwise comparison matrices.

\begin{definition} \label{Def22}
\emph{Transformation of row multiplication}:
Let $\mathbf{A} \in \mathcal{A}^{n \times n}$ be a pairwise comparison matrix and $1 \leq i \leq n$ be an alternative. A \emph{transformation of row multiplication} on $i$ by $\alpha$ provides the pairwise comparison matrix $\hat{\mathbf{A}} \in \mathcal{A}^{n \times n}$ such that $\hat{a}_{ij} = \alpha a_{ij}$ and $\hat{a}_{ji} = a_{ji} / \alpha$ for all $j \neq i$, but $\hat{a}_{k \ell} = a_{k \ell}$ if $\{ k, \ell \} \cap i = \emptyset$ or $k = \ell = i$.
\end{definition}

This transformation changes the $i$th row of a pairwise comparison matrix by multiplying all elements outside the diagonal by $\alpha$ (and dividing all column elements outside the diagonal by $\alpha$). In other words, the importance of alternative $i$ compared to all other alternatives is multiplied by $\alpha$.

\begin{definition} \label{Def23}
\emph{Weight vector}:
Vector $\mathbf{w}  = \left[ w_{i} \right] \in \mathbb{R}^n_+$ is a \emph{weight vector} if $\sum_{i=1}^n w_{i} = 1$.
\end{definition}

The set of all weight vectors of size $n$ is denoted by $\mathcal{R}^{n}$.

\begin{definition} \label{Def24}
\emph{Weighting method}:
Function $f: \mathcal{A}^{n \times n} \to \mathcal{R}^{n}$ is a \emph{weighting method}.
\end{definition}

A weighting method associates a weight vector to each pairwise comparison matrix.

One of the most popular weighting method is the following.

\begin{definition} \label{Def25}
\emph{Eigenvector Method} ($EM$) \citep{Saaty1980}:
The weight vector $\mathbf{w}^{EM} (\mathbf{A}) \in \mathcal{R}^n$ for any pairwise comparison matrix $\mathbf{A} \in \mathcal{A}^{n \times n}$ is given by
\[
\mathbf{A} \mathbf{w}^{EM}(\mathbf{A}) = \lambda_{\max}(\mathbf{A}) \mathbf{w}^{EM}(\mathbf{A}),
\]
where $\lambda_{\max}(\mathbf{A})$ denotes the maximal eigenvalue, also known as Perron eigenvalue, of $\mathbf{A}$.
\end{definition}

A number of articles have discussed axioms for weighting methods \citep{Fichtner1984, Fichtner1986, BarzilaiCookGolany1987, CookKress1988, Bryson1995, Barzilai1997, Dijkstra2013, Csato2017b}. However, according to our knowledge, the following property has been not introduced yet.

\begin{axiom} \label{Axiom1}
\emph{Invariance to row multiplication} ($IRM$):
Let $\mathbf{A}, \hat{\mathbf{A}} \in \mathcal{A}^{n \times n}$ be two pairwise comparison matrices such that $\hat{\mathbf{A}}$ is obtained from $\mathbf{A}$ through a transformation of row multiplication on $i$ by $\alpha$.
Weighting method $f: \mathcal{A}^{n \times n} \to \mathcal{R}^{n}$ is \emph{invariant to row multiplication} if $f_i \left( \hat{\mathbf{A}} \right) / f_j \left( \hat{\mathbf{A}} \right) =  \alpha f_i \left( \mathbf{A} \right) / f_j \left( \mathbf{A} \right)$ for all $j \neq i$.
%and $f_j \left( \hat{\mathbf{A}} \right) / f_k \left( \hat{\mathbf{A}} \right) =  f_j \left( \mathbf{A} \right) / f_k \left( \mathbf{A} \right)$ for all $j, k \neq i$.
\end{axiom}

$IRM$ requires the weight of alternative $i$ to follow the change of its relative importance compared to all other alternatives.

\begin{lemma} \label{Lemma21}
The Eigenvector Method satisfies invariance to row multiplication.
\end{lemma}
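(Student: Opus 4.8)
The plan is to recognise the transformation of row multiplication as a similarity transformation of the pairwise comparison matrix, so that the Perron eigenstructure can be tracked explicitly. First I would introduce the diagonal matrix $\mathbf{D} \in \mathbb{R}^{n \times n}$ with $d_{ii} = \alpha$ and $d_{kk} = 1$ for $k \neq i$, and verify by a direct entrywise computation that $\hat{\mathbf{A}} = \mathbf{D} \mathbf{A} \mathbf{D}^{-1}$: indeed $\left( \mathbf{D} \mathbf{A} \mathbf{D}^{-1} \right)_{k \ell} = (d_{kk}/d_{\ell \ell}) a_{k \ell}$, which equals $\alpha a_{i \ell}$ when $k = i \neq \ell$, equals $a_{ki}/\alpha$ when $\ell = i \neq k$, and equals $a_{k \ell}$ otherwise, matching Definition~\ref{Def22}.

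Since $\mathbf{D}$ is invertible, $\mathbf{A}$ and $\hat{\mathbf{A}}$ are similar and hence share the same spectrum; in particular $\lambda_{\max}(\hat{\mathbf{A}}) = \lambda_{\max}(\mathbf{A})$. Next I would observe that if $\mathbf{w} := \mathbf{w}^{EM}(\mathbf{A})$ satisfies $\mathbf{A} \mathbf{w} = \lambda_{\max}(\mathbf{A}) \mathbf{w}$, then $\hat{\mathbf{A}} (\mathbf{D} \mathbf{w}) = \mathbf{D} \mathbf{A} \mathbf{w} = \lambda_{\max}(\mathbf{A}) (\mathbf{D} \mathbf{w}) = \lambda_{\max}(\hat{\mathbf{A}}) (\mathbf{D} \mathbf{w})$, so $\mathbf{D} \mathbf{w}$ is a Perron eigenvector of $\hat{\mathbf{A}}$. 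Because every pairwise comparison matrix is entrywise positive, the Perron--Frobenius theorem guarantees that $\lambda_{\max}$ is a simple eigenvalue with a one-dimensional eigenspace, so $\mathbf{w}^{EM}(\hat{\mathbf{A}})$ --- being the unique such eigenvector normalised to sum to one --- equals $\mathbf{D} \mathbf{w}$ divided by $\sum_{k=1}^n d_{kk} w_k$.

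Finally I would extract the required ratios. Writing $c = \left( \sum_{k=1}^n d_{kk} w_k \right)^{-1}$, we get $f_k(\hat{\mathbf{A}}) = c\, d_{kk}\, w_k$ for every $k$, hence for any $j \neq i$,
\[
\frac{f_i(\hat{\mathbf{A}})}{f_j(\hat{\mathbf{A}})} = \frac{c\, \alpha\, w_i}{c\, w_j} = \alpha \frac{w_i}{w_j} = \alpha \frac{f_i(\mathbf{A})}{f_j(\mathbf{A})},
\]
which is exactly the identity demanded by Axiom~\ref{Axiom1}. I do not expect a real obstacle here; the only points that need care are invoking positivity of $\mathbf{A}$ so that the Perron eigenvector is unique up to scaling (ensuring $\mathbf{D} \mathbf{w}$ is genuinely the vector selected by the Eigenvector Method), and noting that the common normalisation constant $c$ cancels in the ratio.
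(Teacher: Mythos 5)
Your proof is correct and is essentially the paper's argument in different packaging: the vector $\mathbf{D}\mathbf{w}$ you construct is exactly the vector $\mathbf{w}$ with $w_i = \alpha\, w_i^{EM}(\mathbf{A})$ and $w_j = w_j^{EM}(\mathbf{A})$ that the paper verifies row by row to be a positive eigenvector of $\hat{\mathbf{A}}$ with eigenvalue $\lambda_{\max}(\mathbf{A})$, after which Perron--Frobenius uniqueness gives the ratio identity. The similarity-transformation $\hat{\mathbf{A}} = \mathbf{D}\mathbf{A}\mathbf{D}^{-1}$ viewpoint is a slightly more conceptual way to organise the same computation, but it introduces no genuinely new idea.
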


\begin{proof}
Let $\mathbf{w} \in \mathbb{R}^n_+$ be the vector given by $w_i = \alpha w^{EM}_i \left( \mathbf{A} \right)$ and $w_j = w^{EM}_j \left( \mathbf{A} \right)$ for all $j \neq i$.
It is shown that $\mathbf{w}$ is a (positive) right eigenvector of $\hat{\mathbf{A}}$, so it should be associated with the maximal eigenvalue.

Consider the equation for the $i$th row:
\[
\sum_{k=1}^n \hat{a}_{ik} w_k = \hat{a}_{ii} w_i + \sum_{k \neq i} \hat{a}_{ik} w_k = \alpha \sum_{k=1}^n a_{ik} w^{EM}_k \left( \mathbf{A} \right) = \alpha \lambda_{\max}(\mathbf{A}) w^{EM}_i \left( \mathbf{A} \right) = \lambda_{\max}(\mathbf{A}) w_i.
\]

Consider the equation for the $j (\neq i)$th row:
\[
\sum_{k=1}^n \hat{a}_{jk} w_k = \hat{a}_{ji} w_i +  \sum_{k \neq i}^n \hat{a}_{jk} w_k = \sum_{k=1}^n a_{jk} w^{EM}_k \left( \mathbf{A} \right) = \lambda_{\max}(\mathbf{A}) w^{EM}_j \left( \mathbf{A} \right) = \lambda_{\max}(\mathbf{A}) w_j.
\]

It  implies that $w^{EM}_i \left( \hat{\mathbf{A}} \right) / w^{EM}_j \left( \hat{\mathbf{A}} \right) = w_i / w_j = \alpha w^{EM}_i \left( \mathbf{A} \right) / w^{EM}_j \left( \mathbf{A} \right)$ and the maximal eigenvalues of $\mathbf{A}$ and $\hat{\mathbf{A}}$ coincide. 
\end{proof}

Weighting methods are often used only to derive a \emph{ranking} of the alternatives. Ranking $\succeq$ is a weak order, a complete ($i \succeq j$ or $i \preceq j$ for all $1 \leq i,j \leq n$), reflexive ($i \succeq i$ for all $1 \leq i \leq n$) and transitive (for all $1 \leq i,j,k \leq n$: if $i \succeq j$ and $j \succeq k$, then $i \succeq k$) binary relation on the set of alternatives.
The set of all rankings with respect to $n$ alternatives is denoted by $\mathfrak{R}^{n}$.

\begin{definition} \label{Def26}
\emph{Ranking method}:
Function $g: \mathcal{A}^{n \times n} \to \mathfrak{R}^{n}$ is a \emph{ranking method}.
\end{definition}

A ranking method associates a ranking of the alternatives to each pairwise comparison matrix.
All weighting method induces a ranking method.
For instance, the ranking $\succeq^{EM} \in \mathfrak{R}^{n}$ according to the Eigenvector Method is defined by $i \succeq^{EM}_\mathbf{A} j$ if $w_i^{EM}(\mathbf{A}) \geq w_j^{EM}(\mathbf{A})$ for any pairwise comparison matrix $\mathbf{A} \in \mathcal{A}^{n \times n}$, that is, if the weight of alternative $i$ is at least as high as the weight of alternative $j$ by the Eigenvector Method.

%Some articles have implicitly used weighting methods as a ranking method, for example, \citet{CsatoRonyai2016} have introduced a condition on the ranking of alternatives from an incomplete pairwise comparison matrix.

\section{Properties of ranking methods} \label{Sec3}

The following axioms are specified for ranking methods, that is, they only deal with the relative importance of alternatives. According to our knowledge, the literature on pairwise comparison matrices has been dealt with similar properties only implicitly such as the linear order preservation \citep{CsatoRonyai2016}, or group-coherence for choice \citep{PerezMokotoff2016}. On the other hand, they have been extensively studied in social choice (see, for example, \citet{ChebotarevShamis1998a} and \citet{Gonzalez-DiazHendrickxLohmann2013}).

\begin{axiom} \label{Axiom2}
\emph{Anonymity} ($ANO$):
Let $\mathbf{A} = \left[ a_{ij} \right] \in \mathcal{A}^{n \times n}$ be a pairwise comparison matrix, $\sigma: \{ 1,2, \dots ,n \} \rightarrow \{ 1,2, \dots ,n \}$ be a permutation on the set of alternatives, and $\sigma(\mathbf{A}) = \left[ \sigma(a)_{ij} \right] \in \mathcal{A}^{n \times n}$ be the pairwise comparison matrix obtained from $\mathbf{A}$ by this permutation such that $\sigma(a)_{ij} = a_{\sigma(i) \sigma(j)}$.
Ranking method $g: \mathcal{A}^{n \times n} \to \mathfrak{R}^n$ is \emph{anonymous} if $i \succeq^g_\mathbf{A} j \iff \sigma(i) \succeq^g_{\sigma(\mathbf{A})} \sigma(j)$ for all $1 \leq i,j \leq n$.
\end{axiom}

$ANO$ makes the ranking of alternatives independent of their labels. This property was called 'comparison order invariance' by \citet{Fichtner1984} in the case of weighting methods when the weight of alternative $\sigma(i)$ in $\sigma(\mathbf{A})$ is required to be equal to the weight of alternative $i$ in $\mathbf{A}$.

\begin{definition} \label{Def31}
\emph{Aggregation of pairwise comparison matrices}:
Let $\mathbf{A}^{(1)} = \left[ a_{ij}^{(1)} \right] \in \mathcal{A}^{n \times n}$, $\mathbf{A}^{(2)} = \left[ a_{ij}^{(2)} \right] \in \mathcal{A}^{n \times n}$, $\dots$, $\mathbf{A}^{(k)} = \left[ a_{ij}^{(k)} \right] \in \mathcal{A}^{n \times n}$ be any pairwise comparison matrices. Their \emph{aggregate} is the pairwise comparison matrix $\mathbf{A}^{(1)} \oplus \mathbf{A}^{(2)} \oplus \dots \oplus \mathbf{A}^{(k)} = \left[ \sqrt[k]{a_{ij}^{(1)} a_{ij}^{(2)} \cdots a_{ij}^{(k)}} \right] \in \mathcal{A}^{n \times n}$.
\end{definition}

In other words, aggregation is equivalent to taking the geometric mean of all corresponding matrix elements.
This transformation preserves the reciprocity condition. \citet{AczelSaaty1983} use an axiomatic approach in order to prove that geometric mean is the only reasonable aggregation procedure, a conclusion widely accepted in the decision-making community.

\begin{axiom} \label{Axiom3}
\emph{Aggregation invariance} ($AI$):
Let $\mathbf{A}^{(1)},\mathbf{A}^{(2)}, \dots , \mathbf{A}^{(k)} \in \mathcal{A}^{n \times n}$ be any pairwise comparison matrices. Let $g: \mathcal{A}^{n \times n} \to \mathfrak{R}^n$ be a ranking method such that $i \succeq^g_{\mathbf{A}^{(\ell)}} j$ for all $1 \leq \ell \leq k$.
$g$ is called \emph{aggregation invariant} if $i \succeq^g_{\mathbf{A}^{(1)} \oplus \mathbf{A}^{(2)} \oplus \dots \oplus \mathbf{A}^{(k)}} j$, furthermore, $i \succ^g_{\mathbf{A}^{(1)} \oplus \mathbf{A}^{(2)} \oplus \dots \oplus \mathbf{A}^{(k)}} j$ if $i \succ^g_{\mathbf{A}^{(\ell)}} j$ for at least  one $1 \leq \ell \leq k$.
\end{axiom}

$AI$ is a general and intuitive condition of group decision making: if alternative $i$ is not worse (better) than $j$ according to all decision-makers, this relation should be preserved after their preferences are aggregated.

\citet{PerezMokotoff2016} introduce a weaker property called group-coherence for choice where alternative $i$ should have the highest priority in each pairwise comparison matrices.

\begin{axiom} \label{Axiom4}
\emph{Group-coherence for choice} ($GCC$):
Let $\mathbf{A}^{(1)},\mathbf{A}^{(2)}, \dots , \mathbf{A}^{(k)} \in \mathcal{A}^{n \times n}$ be any pairwise comparison matrices. Let $g: \mathcal{A}^{n \times n} \to \mathfrak{R}^n$ be a ranking method such that $i \succeq^g_{\mathbf{A}^{(\ell)}} j$ for all $1 \leq j \leq n$ and $1 \leq \ell \leq k$.
$g$ is called \emph{group-coherence for choice} if $i \succeq^g_{\mathbf{A}^{(1)} \oplus \mathbf{A}^{(2)} \oplus \dots \oplus \mathbf{A}^{(k)}} j$ for all $1 \leq j \leq n$, furthermore, $i \succ^g_{\mathbf{A}^{(1)} \oplus \mathbf{A}^{(2)} \oplus \dots \oplus \mathbf{A}^{(k)}} j$ for all $1 \leq j \leq n$ if $i \succ^g_{\mathbf{A}^{(\ell)}} j$ for all $1 \leq j \leq n$ in the case of at least one $1 \leq \ell \leq k$.
\end{axiom}

\begin{definition} \label{Def32}
\emph{Opposite of a pairwise comparison matrix}:
Let $\mathbf{A} = \left[ a_{ij} \right] \in \mathcal{A}^{n \times n}$ be a pairwise comparison matrix. Its \emph{opposite} is the pairwise comparison matrix $\mathbf{A}^- = \left[ 1 / a_{ij} \right] \in \mathcal{A}^{n \times n}$.
\end{definition}

Taking the opposite is equivalent to reversing all preferences of the decision-maker, or considering the transpose of the original pairwise comparison matrix.

\begin{axiom} \label{Axiom5}
\emph{Inversion} ($INV$):
Let $\mathbf{A} \in \mathcal{A}^{n \times n}$ be a pairwise comparison matrix.
Ranking method $g: \mathcal{A}^{n \times n} \to \mathfrak{R}^n$ is \emph{invertible} if $i \succeq^g_\mathbf{A} j \iff i \preceq^g_{\mathbf{A}^-} j$ for all $1 \leq i,j \leq n$.
\end{axiom}

Inversion means that reversing all preferences changes the ranking accordingly.

\section{Analysis of the Eigenvector Ranking Method} \label{Sec4}

\begin{lemma} \label{Lemma41}
A ranking method $g: \mathcal{A}^{n \times n} \to \mathfrak{R}^n$, satisfying $ANO$ and $AI$, meets $INV$.
\end{lemma}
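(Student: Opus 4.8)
The plan is to build everything on the single identity $\mathbf{A} \oplus \mathbf{A}^- = \mathbf{1}$: aggregating a pairwise comparison matrix with its opposite gives the all-ones matrix, because the geometric mean of $a_{ij}$ and $1/a_{ij}$ equals $1$ for every pair $(i,j)$. The all-ones matrix is invariant under every relabelling of the alternatives, so $ANO$ will pin down its ranking completely, and $AI$ will then force $\mathbf{A}$ and $\mathbf{A}^-$ not to agree on any strict comparison --- which is exactly the content of $INV$.

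First I would record the auxiliary fact that a ranking method satisfying $ANO$ declares all alternatives tied on $\mathbf{1}$, i.e.\ $i \succeq^g_{\mathbf{1}} j$ and $j \succeq^g_{\mathbf{1}} i$ for all $i,j$. Indeed, for the transposition $\sigma$ swapping $i$ and $j$ we have $\sigma(\mathbf{1}) = \mathbf{1}$, $\sigma(i)=j$, $\sigma(j)=i$, so $ANO$ turns $i \succeq^g_{\mathbf{1}} j$ into $j \succeq^g_{\mathbf{1}} i$ and conversely; since $\succeq^g_{\mathbf{1}}$ is complete, at least one of the two sides holds, hence both do. Next, for the forward implication of $INV$, assume $i \succeq^g_\mathbf{A} j$ and, towards a contradiction, that $i \preceq^g_{\mathbf{A}^-} j$ fails; by completeness of $\succeq^g_{\mathbf{A}^-}$ this means $i \succ^g_{\mathbf{A}^-} j$. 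Applying $AI$ to the two matrices $\mathbf{A}^{(1)} = \mathbf{A}$ and $\mathbf{A}^{(2)} = \mathbf{A}^-$: the hypotheses $i \succeq^g_{\mathbf{A}} j$ and $i \succeq^g_{\mathbf{A}^-} j$ both hold, and the comparison is strict for $\mathbf{A}^-$, so $AI$ yields $i \succ^g_{\mathbf{A} \oplus \mathbf{A}^-} j$, that is $i \succ^g_{\mathbf{1}} j$, contradicting the auxiliary fact. Therefore $i \preceq^g_{\mathbf{A}^-} j$.

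The reverse implication then follows with no extra work: since $(\mathbf{A}^-)^- = \mathbf{A}$, applying the direction just proved to the matrix $\mathbf{A}^-$ with the roles of $i$ and $j$ interchanged turns $i \preceq^g_{\mathbf{A}^-} j$ (i.e.\ $j \succeq^g_{\mathbf{A}^-} i$) into $i \succeq^g_{\mathbf{A}} j$. I do not expect a real obstacle here; the only points that need care are invoking completeness at the right moment to pass from the negation of $i \preceq^g_{\mathbf{A}^-} j$ to a strict comparison, and keeping the weak/strict bookkeeping in $AI$ straight. The single genuine idea is the observation that $\mathbf{A} \oplus \mathbf{A}^- = \mathbf{1}$ combined with the rigidity of $\mathbf{1}$ under $ANO$.
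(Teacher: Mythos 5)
Your proof is correct and follows essentially the same route as the paper's: the identity $\mathbf{A} \oplus \mathbf{A}^- = \mathbf{1}$, anonymity forcing $i \sim^g_{\mathbf{1}} j$, and aggregation invariance yielding a strict preference on $\mathbf{1}$, hence a contradiction. You merely spell out the details the paper leaves implicit (the transposition argument on $\mathbf{1}$, the use of completeness, and the reduction of the reverse implication via $(\mathbf{A}^-)^- = \mathbf{A}$).
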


\begin{proof}
Assume to the contrary that there exist alternatives $i$ and $j$ with a pairwise comparison matrix $\mathbf{A}$ such that $i \succeq^g_{\mathbf{A}} j$ and $i \succ^g_{\mathbf{A}^-} j$.
Consider the aggregated pairwise comparison matrix $\mathbf{A} \oplus \mathbf{A}^- = \mathbf{1}$. Anonymity implies $i \sim^g_{\mathbf{1}} j$, while aggregation invariance leads to $i \succ^g_{\mathbf{1}} j$, a contradiction.
\end{proof}

\begin{lemma} \label{Lemma42}
The Eigenvector Method satisfies anonymity.
\end{lemma}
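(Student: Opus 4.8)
The plan is to establish the natural \emph{equivariance} of the Eigenvector Method under a relabelling of the alternatives: if $\sigma$ is a permutation, then the normalised Perron eigenvector of $\sigma(\mathbf{A})$ is obtained from that of $\mathbf{A}$ by the same relabelling. Once this is in hand, anonymity of the induced ranking follows simply by comparing two coordinates. The argument is self-contained and mirrors the proof of Lemma~\ref{Lemma21}, the only external input being the Perron--Frobenius fact that a positive (hence irreducible) matrix has, up to scaling, a unique positive eigenvector, namely the one belonging to $\lambda_{\max}$.

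Concretely, fix $\mathbf{A}\in\mathcal{A}^{n\times n}$ and a permutation $\sigma$, write $\mathbf{w}=\mathbf{w}^{EM}(\mathbf{A})$, and define $\mathbf{v}\in\mathbb{R}^{n}_+$ by $v_i=w_{\sigma(i)}$. I would check directly that $\mathbf{v}$ is a positive eigenvector of $\sigma(\mathbf{A})$ for the eigenvalue $\lambda_{\max}(\mathbf{A})$: for every $i$,
\[
\bigl(\sigma(\mathbf{A})\,\mathbf{v}\bigr)_i = \sum_{j=1}^n \sigma(a)_{ij}\, v_j = \sum_{j=1}^n a_{\sigma(i)\sigma(j)}\, w_{\sigma(j)} = \sum_{k=1}^n a_{\sigma(i)k}\, w_k = \lambda_{\max}(\mathbf{A})\, w_{\sigma(i)} = \lambda_{\max}(\mathbf{A})\, v_i ,
\]
where the third equality is the substitution $k=\sigma(j)$, a bijection of $\{1,\dots,n\}$, and the fourth is the defining equation of $\mathbf{w}$. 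Since $\mathbf{v}$ is positive and $\sum_i v_i=\sum_i w_{\sigma(i)}=\sum_k w_k=1$, it is a weight vector, so uniqueness of the Perron eigenvector forces $\mathbf{w}^{EM}(\sigma(\mathbf{A}))=\mathbf{v}$. In particular $\lambda_{\max}(\sigma(\mathbf{A}))=\lambda_{\max}(\mathbf{A})$ and $w^{EM}_i(\sigma(\mathbf{A}))=w^{EM}_{\sigma(i)}(\mathbf{A})$ for all $i$.

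It then remains to read off $ANO$ from this last identity: permuting the alternatives by $\sigma$ permutes the weights, and hence the induced ranking, the very same way, so the ordering of any two alternatives is preserved under the relabelling, which is exactly the content of Axiom~\ref{Axiom2}. I do not expect a real obstacle here; the one point that requires attention is purely bookkeeping, namely matching the direction in which $\sigma$ acts on the row/column indices of $\sigma(\mathbf{A})$ with the direction in which it acts on the pair $(i,j)$ in the statement of the axiom (a relabelling and its inverse must be kept apart), together with the appeal to Perron--Frobenius to pass from ``$\mathbf{v}$ is \emph{a} positive eigenvector'' to ``$\mathbf{v}$ is \emph{the} normalised Perron eigenvector''. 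Conceptually, $\sigma(\mathbf{A})$ is the conjugate of $\mathbf{A}$ by the permutation matrix of $\sigma$, so the claim is just the basis-independence of the Perron eigendata under permutation similarity; the displayed computation is the elementary incarnation of this remark.
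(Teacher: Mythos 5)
Your proof is correct, but it takes a genuinely different route from the paper: the paper disposes of this lemma with a one-line appeal to the literature, namely that a ranking method induced by a weighting method with Fichtner's ``comparison order invariance'' is automatically anonymous, and that \citet[p.~344]{Fichtner1986} asserts the Eigenvector Method has this property. Your argument is, in effect, a self-contained proof of that cited fact: the permutation-similarity computation showing $\mathbf{v}=\bigl(w_{\sigma(i)}\bigr)_i$ is a positive normalised eigenvector of $\sigma(\mathbf{A})$ with eigenvalue $\lambda_{\max}(\mathbf{A})$, plus Perron--Frobenius uniqueness to conclude $w^{EM}_i(\sigma(\mathbf{A}))=w^{EM}_{\sigma(i)}(\mathbf{A})$, is exactly right and deliberately parallels the structure of the paper's proof of Lemma~\ref{Lemma21}. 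What your approach buys is independence from an external reference (and from a technical report that merely ``mentions'' the property); what the paper's buys is brevity. One remark on the bookkeeping point you flag: with the paper's literal convention $\sigma(a)_{ij}=a_{\sigma(i)\sigma(j)}$, your identity says alternative $i$ in $\sigma(\mathbf{A})$ plays the role of alternative $\sigma(i)$ in $\mathbf{A}$, so the biconditional in Axiom~\ref{Axiom2} as printed is obtained by applying your identity with $\sigma^{-1}$ in place of $\sigma$ (or, equivalently, by reading the axiom with the inverse relabelling); this is a notational slip in the axiom's statement rather than a gap in your argument, and you were right to isolate it as the only place where care is needed.
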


\begin{proof}
A ranking method is guaranteed to be anonymous if it is derived from a weighting method satisfying comparison order invariance, a property introduced by \citet{Fichtner1984}. \citet[p.~344]{Fichtner1986} mentions that the Eigenvector Method fulfil comparison order invariance.
\end{proof}

\begin{lemma} \label{Lemma43}
The Eigenvector Method may violate inversion.
\end{lemma}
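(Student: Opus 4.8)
The plan is to refute $INV$ for the Eigenvector Method by a single explicit counterexample, so the proof will be a construction rather than a structural argument. The key reduction is that the opposite $\mathbf{A}^-$ is nothing but the transpose of $\mathbf{A}$ (indeed $(\mathbf{A}^-)_{ij} = 1/a_{ij} = a_{ji}$), hence the right Perron eigenvector of $\mathbf{A}^-$ equals, up to the normalisation of Definition~\ref{Def23}, the \emph{left} Perron eigenvector of $\mathbf{A}$. Thus $INV$ for the pair $\{\mathbf{A},\mathbf{A}^-\}$ is precisely the demand that the right and the left Perron eigenvectors of $\mathbf{A}$ induce mutually reverse rankings. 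It therefore suffices to produce one $\mathbf{A} \in \mathcal{A}^{n \times n}$ together with two alternatives $i,j$ for which $w_i^{EM}(\mathbf{A}) > w_j^{EM}(\mathbf{A})$ while simultaneously $w_i^{EM}(\mathbf{A}^-) \geq w_j^{EM}(\mathbf{A}^-)$, since this gives $i \succ^{EM}_{\mathbf{A}} j$ without $i \preceq^{EM}_{\mathbf{A}^-} j$.

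Before constructing such a matrix I would record where it cannot be found, which also certifies the minimality claimed in the introduction. For $n \leq 2$ every pairwise comparison matrix is consistent and its right and left eigenvectors are reciprocal. For $n = 3$ the Eigenvector Method returns exactly the row geometric mean: the vector of cube roots of the row products is a positive eigenvector of $\mathbf{A}$, hence the Perron one; and passing to $\mathbf{A}^-$ inverts every row geometric mean, which reverses the ranking. So $EM$ satisfies $INV$ whenever $n \leq 3$, and the search must start at $n = 4$; I would aim at a $4 \times 4$ witness.

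For the construction I would look for a ``maximally asymmetric'' $4 \times 4$ matrix, that is, one far from consistency but with enough structure (few distinct off-diagonal values, a convenient permutation symmetry) that $\lambda_{\max}(\mathbf{A}) = \lambda_{\max}(\mathbf{A}^-)$ and the signs of $w_i^{EM}(\mathbf{A}) - w_j^{EM}(\mathbf{A})$ and $w_i^{EM}(\mathbf{A}^-) - w_j^{EM}(\mathbf{A}^-)$ can be pinned down exactly. One then (i) locates $\lambda_{\max}$, (ii) solves the two positive-eigenvector systems $\mathbf{A}\mathbf{w} = \lambda_{\max}\mathbf{w}$ and $\mathbf{A}^-\mathbf{w}' = \lambda_{\max}\mathbf{w}'$, and (iii) exhibits a pair $(i,j)$ ordered the same way by $\mathbf{w}$ and by $\mathbf{w}'$. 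A useful check along the way is that $\mathbf{A} \oplus \mathbf{A}^- = \mathbf{1}$, so by anonymity (Lemma~\ref{Lemma42}) the aggregate ties all alternatives; any ``same-direction'' strict discrepancy between the $\mathbf{A}$- and $\mathbf{A}^-$-rankings is therefore genuinely incompatible with $INV$ — this is exactly the configuration ruled out, under $AI$, in the proof of Lemma~\ref{Lemma41}.

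The only real obstacle is steps (ii)--(iii): a generic $4 \times 4$ pairwise comparison matrix has an irreducible quartic characteristic equation, so the craft lies in choosing the entries so that either $\lambda_{\max}$ and the eigenvector coordinates are available in closed form, or at least the two coordinate inequalities can be certified rigorously — for instance by a short rational-interval bound on $\lambda_{\max}$ combined with monotonicity of the relevant eigenvector coordinates over that interval. Everything else — the reduction to right-versus-left eigenvectors and the non-existence of a $3 \times 3$ witness — is routine.
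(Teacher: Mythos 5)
Your reduction is sound and is exactly the paper's underlying idea: $\mathbf{A}^-$ is the transpose of $\mathbf{A}$, so $INV$ for the pair $\left\{ \mathbf{A}, \mathbf{A}^- \right\}$ amounts to asking that the right and the (normalised, inverted) left Perron eigenvectors of $\mathbf{A}$ induce mutually reverse rankings, and the known right--left asymmetry of the eigenvector procedure \citep{JohnsonBeineWang1979} is precisely what fails. Your argument that no witness exists for $n \leq 3$ is also correct and is what the paper records separately as Lemma~\ref{Lemma44}. One small point: $\lambda_{\max}(\mathbf{A}) = \lambda_{\max}(\mathbf{A}^-)$ holds automatically because a matrix and its transpose have the same spectrum, so you do not need to engineer any permutation symmetry to arrange it.

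The genuine gap is that you never actually produce the counterexample. The lemma is an existence claim, so its proof \emph{is} the witness; a description of how one would go about searching for a suitable $4 \times 4$ matrix, however plausible, proves nothing. You explicitly defer ``steps (ii)--(iii)'' --- locating $\lambda_{\max}$ and certifying the two coordinate inequalities --- and those steps are the entire content of the proof. The paper closes this gap by exhibiting concrete matrices with their computed Perron vectors: a $5 \times 5$ example of \citet{DoddDoneganMcMaster1995} and a $4 \times 4$ matrix $\mathbf{B}$ with rows $(1,1,1,9)$, $(1,1,2,5)$, $(1,1/2,1,9)$, $(1/9,1/5,1/9,1)$, for which $w_2^{EM}(\mathbf{B}) > w_1^{EM}(\mathbf{B})$ and simultaneously $w_2^{EM}(\mathbf{B}^-) > w_1^{EM}(\mathbf{B}^-)$ --- the same-direction strict discrepancy your reduction calls for (with the bonus that $CR(\mathbf{B}) \approx 0.06$ is below the usual acceptability threshold, which strengthens the practical force of the example). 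Until you supply such a matrix and verify the two eigenvector inequalities (a numerical computation of the principal eigenvectors is acceptable and is what the paper does), the proof is incomplete.
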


\begin{proof}
It is due to right-left asymmetry \citep{JohnsonBeineWang1979}, the difference of the rankings induced by the right and inverse left eigenvectors.
Using the example of \citet{DoddDoneganMcMaster1995}:
\[
\mathbf{A} = \left[
\begin{array}{K{1.5em} K{1.5em} K{1.5em} K{1.5em} K{1.5em}}
    1     & 1     & 3     & 9     & 9 \\
    1     & 1     & 5     & 8     & 5 \\
    1/3   & 1/5   & 1     & 9     & 5 \\
    1/9   & 1/8   & 1/9   & 1     & 1 \\
    1/9   & 1/5   & 1/5   & 1     & 1 \\
\end{array}
\right]:
\mathbf{w}^{EM}(\mathbf{A}) = \left[
\begin{array}{c}
    0.3657 \\
    0.3896 \\
    0.1672 \\
    0.0347 \\
    0.0429 \\
\end{array} 
\right] \text{; }
\mathbf{w}^{EM}(\mathbf{A}^-) = \left[
\begin{array}{c}
    0.0388 \\
    0.0432 \\
    0.1045 \\
    0.4580 \\
    0.3555 \\
\end{array} 
\right],
\]
therefore $1 \prec^{EM}_{\mathbf{A}} 2$ and $1 \prec^{EM}_{\mathbf{A}^-} 2$. Note that the principal eigenvalue is $\lambda(\mathbf{A}) \approx 5.348$ and the inconsistency ratio is $CR(\mathbf{A}) \approx 0.078$.

Furthermore, a counterexample with four alternatives and an inconsistency ratio below the 10\% threshold can be found, too (a $4 \times 4$ pairwise comparison matrix of \citet{JohnsonBeineWang1979} has an unacceptable level of inconsistency):
%\footnote{~In a pairwise comparison matrix of size $3 \times 3$, the left and right eigenvectors induce the same ranking as the Logarithmic Least Squares Method \citep{CrawfordWilliams1985}, which trivially satisfies inversion.}
\[
\mathbf{B} = \left[
\begin{array}{K{1.5em} K{1.5em} K{1.5em} K{1.5em}}
    1     & 1     & 1     & 9     \\
    1     & 1     & 2     & 5     \\
    1     &  1/2  & 1     & 9     \\
     1/9  &  1/5  &  1/9  & 1     \\
\end{array}
\right]:
\mathbf{w}^{EM}(\mathbf{B}) = \left[
\begin{array}{c}
    0.3242 \\
    0.3502 \\
    0.2821 \\
    0.0435 \\
\end{array} 
\right] \text{; }
\mathbf{w}^{EM}(\mathbf{B}^-) = \left[
\begin{array}{c}
    0.0886 \\
    0.0905 \\
    0.1104 \\
    0.7105 \\
\end{array} 
\right],
\]
hence $1 \prec^{EM}_{\mathbf{B}} 2$ and $1 \prec^{EM}_{\mathbf{B}^-} 2$. The principal eigenvalue is $\lambda(\mathbf{B}) \approx 4.158$, so the inconsistency ratio is $CR(\mathbf{B}) \approx 0.06$.
\end{proof}

\begin{proposition} \label{Prop41}
The Eigenvector Method may violate aggregation invariance.
\end{proposition}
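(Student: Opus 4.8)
The plan is to obtain the statement as a short corollary of Lemmas~\ref{Lemma41}, \ref{Lemma42} and~\ref{Lemma43}. First I would argue by contraposition from Lemma~\ref{Lemma41}: that lemma asserts that every ranking method satisfying $ANO$ and $AI$ also satisfies $INV$, so any ranking method that satisfies $ANO$ but violates $INV$ cannot satisfy $AI$. By Lemma~\ref{Lemma42} the Eigenvector Method satisfies $ANO$, and by Lemma~\ref{Lemma43} it violates $INV$; hence it violates $AI$.

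For an explicit witness I would reuse the matrix $\mathbf{B}$ (or $\mathbf{A}$) from the proof of Lemma~\ref{Lemma43}. Since $\mathbf{B} \oplus \mathbf{B}^- = \mathbf{1}$ by Definition~\ref{Def31}, and $\mathbf{w}^{EM}(\mathbf{1})$ assigns equal weight to every alternative (so that $2 \sim^{EM}_{\mathbf{B} \oplus \mathbf{B}^-} 1$), whereas the numerical data in Lemma~\ref{Lemma43} give $2 \succ^{EM}_{\mathbf{B}} 1$ and $2 \succ^{EM}_{\mathbf{B}^-} 1$, alternative $2$ is strictly preferred to alternative $1$ according to both decision-makers. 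Aggregation invariance would then force $2 \succ^{EM}_{\mathbf{B} \oplus \mathbf{B}^-} 1$, contradicting $2 \sim^{EM}_{\mathbf{B} \oplus \mathbf{B}^-} 1$. This contradiction proves the claim, and because $\mathbf{B}$ has only four alternatives it simultaneously delivers the minimal counterexample announced in the introduction.

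I do not anticipate a genuine obstacle: all the substantive work is already contained in Lemmas~\ref{Lemma41}--\ref{Lemma43}. The only point needing a line of care is to make sure the chosen example falls foul of the \emph{strict} clause of $AI$ (a strict preference common to all aggregands whose strictness is lost in the aggregate) rather than merely the weak clause or, worse, being misread as a preference \emph{reversal}; this is settled at once by the identity $\mathbf{B} \oplus \mathbf{B}^- = \mathbf{1}$ together with Lemma~\ref{Lemma42}. A secondary, separable issue — not required for the proposition itself but relevant to the ``minimal in the number of alternatives'' claim — is to check that no $3 \times 3$ example can work, i.e.\ that right-left asymmetry is impossible for $n = 3$; I would relegate that verification to a remark rather than carry it inside this proof.
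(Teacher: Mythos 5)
Your proposal is correct and follows essentially the same route as the paper, which proves Proposition~\ref{Prop41} by citing Lemmata~\ref{Lemma41}, \ref{Lemma42} and \ref{Lemma43} directly; your explicit witness via $\mathbf{B} \oplus \mathbf{B}^- = \mathbf{1}$ merely unpacks the proof of Lemma~\ref{Lemma41} for the concrete matrix $\mathbf{B}$, and your identification of the strict clause of $AI$ as the one violated is accurate.
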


\begin{proof}
It immediately follows from Lemmata~\ref{Lemma41}, \ref{Lemma42} and \ref{Lemma43}.
\end{proof}

\begin{lemma} \label{Lemma44}
The Eigenvector Method satisfies aggregation invariance if the number of alternatives is at most three.
\end{lemma}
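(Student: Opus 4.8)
The plan is to prove that for $n\le 3$ the Eigenvector Method induces exactly the ranking of the row geometric mean method, and that the latter satisfies $AI$ for every $n$; the statement then follows at once. The cases $n=1$ (vacuous) and $n=2$ are immediate: for $n=2$ the Perron eigenvector of $\mathbf{A}$ is proportional to $(a_{12},1)$, so $1\succeq^{EM}_{\mathbf{A}}2$ iff $a_{12}\ge 1$; since the aggregate of $2\times2$ matrices with $(1,2)$-entries $a^{(1)},\dots,a^{(k)}$ has $(1,2)$-entry $\sqrt[k]{a^{(1)}\cdots a^{(k)}}$, which is $\ge 1$ (and $>1$ if some $a^{(\ell)}>1$) whenever all $a^{(\ell)}\ge 1$, the axiom holds. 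So assume $n=3$ from now on.

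The key reduction uses Lemma~\ref{Lemma21}. Write $\mathbf{g}(\mathbf{A})$ for the normalised row geometric mean vector, $g_i(\mathbf{A})\propto\sqrt[3]{\prod_k a_{ik}}$. A one-line computation shows that $\mathbf{g}$ obeys the same transformation rule as $\mathbf{w}^{EM}$ under a transformation of row multiplication on $i$ by $\alpha$, namely $g_i(\hat{\mathbf{A}})/g_j(\hat{\mathbf{A}})=\alpha\,g_i(\mathbf{A})/g_j(\mathbf{A})$ for $j\neq i$ (and hence $g_k(\hat{\mathbf{A}})/g_j(\hat{\mathbf{A}})=g_k(\mathbf{A})/g_j(\mathbf{A})$ for $j,k\neq i$); the analogous rule for $\mathbf{w}^{EM}$ is Lemma~\ref{Lemma21}. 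Therefore the property ``$\mathbf{w}^{EM}(\mathbf{A})=\mathbf{g}(\mathbf{A})$'' is preserved under transformations of row multiplication. Next I would check that every $3\times3$ pairwise comparison matrix $\mathbf{A}$ is reachable from the circulant matrix $\mathbf{A}_0(t)$ with $a_{12}=a_{23}=a_{31}=\sqrt[3]{t}$, where $t:=a_{12}a_{23}a_{31}$ is the ``cycle product'' of $\mathbf{A}$, by a sequence of transformations of row multiplication: explicitly, performing row multiplication on alternative~$1$ by $a_{13}\sqrt[3]{t}$ and on alternative~$2$ by $a_{23}/\sqrt[3]{t}$ turns $\mathbf{A}_0(t)$ into $\mathbf{A}$, the verification being a short calculation that works out precisely because the cycle product is invariant under row multiplication and $\mathbf{A}_0(t)$ has cycle product $t$ too. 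For the circulant matrix $\mathbf{A}_0(t)$ all row sums equal $1+\sqrt[3]{t}+\sqrt[3]{1/t}$, so its (positive, hence Perron) eigenvector is $(1,1,1)$, while all of its row products equal $1$; thus $\mathbf{w}^{EM}(\mathbf{A}_0(t))=\mathbf{g}(\mathbf{A}_0(t))=(1/3,1/3,1/3)$. Combined with the preservation property, $\mathbf{w}^{EM}(\mathbf{A})=\mathbf{g}(\mathbf{A})$ for every $3\times3$ pairwise comparison matrix, so $\succeq^{EM}_{\mathbf{A}}$ is the ranking induced by $\mathbf{g}(\mathbf{A})$.

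It remains to see that the ranking method induced by $\mathbf{g}$ satisfies $AI$. From Definition~\ref{Def31} one obtains $g_i(\mathbf{A}^{(1)}\oplus\cdots\oplus\mathbf{A}^{(k)})/g_j(\mathbf{A}^{(1)}\oplus\cdots\oplus\mathbf{A}^{(k)})=\prod_{\ell=1}^{k}\bigl(g_i(\mathbf{A}^{(\ell)})/g_j(\mathbf{A}^{(\ell)})\bigr)^{1/k}$, so if each factor is $\ge 1$ then so is the product, and if moreover one factor exceeds $1$ then so does the product --- which is exactly the weak and strict parts of $AI$. Since for $n\le 3$ the Eigenvector ranking coincides with the geometric mean ranking, it inherits $AI$. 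I expect the only point needing genuine care to be the reachability claim in the second paragraph, i.e.\ that transformations of row multiplication act transitively on the set of $3\times3$ pairwise comparison matrices with a prescribed cycle product; but, as indicated, this is settled by writing down the two multipliers and checking the three resulting entries.
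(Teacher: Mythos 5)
Your proposal is correct and follows the same overall route as the paper: reduce aggregation invariance of the Eigenvector ranking for $n\le 3$ to the corresponding (easy) property of the row geometric mean ranking. The difference is in how the coincidence of the two methods is established. The paper simply cites the known result that the Eigenvector Method coincides with the Logarithmic Least Squares Method (i.e.\ the row geometric mean) for $n=3$, whereas you give a self-contained proof of that coincidence: both weighting methods obey the same transformation rule under row multiplication (Lemma~\ref{Lemma21} for $EM$, a direct computation for the geometric mean), the two methods agree on the circulant matrices $\mathbf{A}_0(t)$ where both return the uniform vector, and every $3\times 3$ pairwise comparison matrix is reachable from the circulant matrix with the same cycle product by two row multiplications --- a claim you verify explicitly with the multipliers $a_{13}\sqrt[3]{t}$ and $a_{23}/\sqrt[3]{t}$, and which checks out. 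Your argument is therefore longer but fully self-contained and arguably more illuminating, since it exhibits the $n=3$ coincidence as a consequence of the row-multiplication invariance that the paper already establishes; the paper's version buys brevity at the cost of an external reference. Your treatment of $n\le 2$ and of the $AI$ property for the geometric mean ranking (the aggregate ratio being the geometric mean of the individual ratios) is also correct.
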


\begin{proof}
The Eigenvector Method coincides with the Logarithmic Least Squares Method in the case of $n=3$ \citep{CrawfordWilliams1985}, and the latter obviously satisfies aggregation invariance.
\end{proof}

\begin{proposition} \label{Prop42}
Aggregation invariance and group-coherence for choice are equivalent properties in the case of a weighting method  satisfying invariance to row multiplication.
\end{proposition}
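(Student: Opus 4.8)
The plan is to prove the two implications separately. The implication from $AI$ to $GCC$ requires no special structure: assuming $g$ satisfies $AI$ and that the premise of $GCC$ holds, fix any alternative $j'$; then the pair $(i,j')$ meets the premise of $AI$, so $i \succeq^g$ against $j'$ in the aggregate, and letting $j'$ range over all alternatives yields the conclusion of $GCC$. The strict clause of $GCC$ follows from the strict clause of $AI$ in the same way. The content of the proposition therefore lies in the converse, which is where invariance to row multiplication enters.

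For $GCC \Rightarrow AI$, let $g$ be the ranking method induced by a weighting method $f$ that satisfies $IRM$, let pairwise comparison matrices $\mathbf{A}^{(1)}, \dots, \mathbf{A}^{(k)}$ and a pair of alternatives $(i,j)$ satisfy the premise of $AI$ --- that is, $f_i(\mathbf{A}^{(\ell)}) \ge f_j(\mathbf{A}^{(\ell)})$ for every $1 \le \ell \le k$ --- and write $\mathbf{A} = \mathbf{A}^{(1)} \oplus \dots \oplus \mathbf{A}^{(k)}$. The key idea is to replace each $\mathbf{A}^{(\ell)}$ by a matrix $\hat{\mathbf{A}}^{(\ell)}$ in which $i$ becomes a \emph{top} alternative while the comparison between $i$ and $j$ is preserved, so that $GCC$ becomes applicable. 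This is achieved by applying, for every alternative $m \notin \{i,j\}$, a transformation of row multiplication on $m$ by a small positive factor $\beta_m^{(\ell)}$. By Axiom~\ref{Axiom1} such a step multiplies the ratio $f_i / f_m$ by $1 / \beta_m^{(\ell)}$ and leaves every ratio not involving $m$ unchanged --- in particular the ratio $f_i / f_j$; hence choosing $\beta_m^{(\ell)} \le f_i(\mathbf{A}^{(\ell)}) / f_m(\mathbf{A}^{(\ell)})$, a genuine positive bound because the weights are positive, guarantees $f_i(\hat{\mathbf{A}}^{(\ell)}) \ge f_m(\hat{\mathbf{A}}^{(\ell)})$. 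Together with $f_i(\hat{\mathbf{A}}^{(\ell)}) \ge f_j(\hat{\mathbf{A}}^{(\ell)})$, which is the untouched premise, this makes $i$ a top alternative of $\hat{\mathbf{A}}^{(\ell)}$ for every $\ell$.

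The second ingredient is that geometric-mean aggregation commutes with these transformations. Indeed $\hat{\mathbf{A}}^{(\ell)}$ differs from $\mathbf{A}^{(\ell)}$ only in that, for each $m \notin \{i,j\}$, the off-diagonal entries of row $m$ are multiplied by $\beta_m^{(\ell)}$ and those of column $m$ divided by it, so the aggregate $\hat{\mathbf{A}} = \hat{\mathbf{A}}^{(1)} \oplus \dots \oplus \hat{\mathbf{A}}^{(k)}$ is exactly $\mathbf{A}$ after the transformations of row multiplication on each $m \notin \{i,j\}$ by $\beta_m = \left( \prod_{\ell=1}^{k} \beta_m^{(\ell)} \right)^{1/k}$. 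Applying $GCC$ to $\hat{\mathbf{A}}^{(1)}, \dots, \hat{\mathbf{A}}^{(k)}$ now gives, in particular, $i \succeq^g_{\hat{\mathbf{A}}} j$, i.e.\ $f_i(\hat{\mathbf{A}}) \ge f_j(\hat{\mathbf{A}})$. Since none of the transformations taking $\mathbf{A}$ to $\hat{\mathbf{A}}$ acts on rows $i$ or $j$, Axiom~\ref{Axiom1} gives $f_i(\hat{\mathbf{A}}) / f_j(\hat{\mathbf{A}}) = f_i(\mathbf{A}) / f_j(\mathbf{A})$, hence $f_i(\mathbf{A}) \ge f_j(\mathbf{A})$, that is $i \succeq^g_{\mathbf{A}} j$. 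For the strict clause, if $i \succ^g_{\mathbf{A}^{(\ell_0)}} j$ for some $\ell_0$, one selects $\beta_m^{(\ell_0)}$ strictly below the bound above so that $i \succ^g_{\hat{\mathbf{A}}^{(\ell_0)}} j'$ for all $j'$; the strict clause of $GCC$ then yields $f_i(\hat{\mathbf{A}}) > f_j(\hat{\mathbf{A}})$, hence $i \succ^g_{\mathbf{A}} j$.

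I expect the main obstacle to be recognising that $i$ should be promoted to the top not by inflating its own row --- which would distort the very ratio $f_i / f_j$ one is trying to carry across the aggregation --- but by deflating every other row, and then verifying carefully the two bookkeeping facts this relies on: that under $IRM$ a multiplication applied to a third row leaves the ratio $f_i / f_j$ invariant, and that aggregation turns the family $\left( \beta_m^{(\ell)} \right)_{\ell=1}^{k}$ into the single factor $\beta_m$, so that $\hat{\mathbf{A}}$ is again obtained from the original aggregate $\mathbf{A}$ by a sequence of transformations of row multiplication.
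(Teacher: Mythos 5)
Your proof is correct and follows essentially the same strategy as the paper's: use $IRM$ to turn $i$ into a top alternative of each individual matrix while preserving the ratio $f_i/f_j$, observe that geometric-mean aggregation commutes with transformations of row multiplication, and then invoke $GCC$ on the modified matrices. The only (immaterial) difference is that the paper achieves the promotion of $i$ by multiplying rows $i$ \emph{and} $j$ by the same large factor $\alpha^{(\ell)}$ --- which likewise leaves $f_i/f_j$ intact --- and argues by contraposition, whereas you deflate every row $m \notin \{i,j\}$ and argue directly.
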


\begin{proof}
$AI \Rightarrow GCC$: if aggregation invariance holds, there can be no pair of alternatives for which a reversion in the order of preferences happens, and, in particular, there can be no reversion in the order of preferences between the alternative with the highest priority and another alternative. Therefore, group-coherence for choice holds.

$GCC \Rightarrow AI$: Suppose that a weighting method $f: \mathcal{A}^{n \times n} \to \mathcal{R}^{n}$ does not satisfy $AI$, namely, there exist some pairwise comparison matrices $\mathbf{A}^{(1)},\mathbf{A}^{(2)}, \dots , \mathbf{A}^{(k)} \in \mathcal{A}^{n \times n}$ such that $f_i \left( \mathbf{A}^{(\ell)} \right) \geq f_j \left( \mathbf{A}^{(\ell)} \right)$, but $f_i \left( {\mathbf{A}^{(1)} \oplus \mathbf{A}^{(2)} \oplus \dots \oplus \mathbf{A}^{(k)}} \right) < f_j \left( {\mathbf{A}^{(1)} \oplus \mathbf{A}^{(2)} \oplus \dots \oplus \mathbf{A}^{(k)}} \right)$, or $f_i \left( {\mathbf{A}^{(1)} \oplus \mathbf{A}^{(2)} \oplus \dots \oplus \mathbf{A}^{(k)}} \right) = f_j \left( {\mathbf{A}^{(1)} \oplus \mathbf{A}^{(2)} \oplus \dots \oplus \mathbf{A}^{(k)}} \right)$ and $f_i \left( \mathbf{A}^{(\ell)} \right) > f_j \left( \mathbf{A}^{(\ell)} \right)$ for at least one $1 \leq \ell \leq n$.

Consider the pairwise comparison matrix $\mathbf{A}^{(\ell)}$.
Choose a number $\alpha^{(\ell)} > \max \{ f_m \left( \mathbf{A}^{(\ell)} \right): 1 \leq m \leq n \} / f_i \left( \mathbf{A}^{(\ell)} \right)$.
Let $\hat{\mathbf{A}}^{(\ell)}$ be the pairwise comparison matrix obtained from $\mathbf{A}^{(\ell)}$ through two transformations of row multiplication on $i$ and $j$ by $\alpha^{(\ell)}$ (in an arbitrary order). Consequently, $f_i \left( \hat{\mathbf{A}}^{(\ell)} \right) / f_j \left( \hat{\mathbf{A}}^{(\ell)} \right) = f_i \left( \mathbf{A}^{(\ell)} \right) / f_j \left( \mathbf{A}^{(\ell)} \right) \geq 1$ and $f_i \left( \hat{\mathbf{A}}^{(\ell)} \right) \geq f_m \left( \hat{\mathbf{A}}^{(\ell)} \right)$ for all $1 \leq m \leq n$ due to $IRM$.

Note that ${\hat{\mathbf{A}}^{(1)} \oplus \hat{\mathbf{A}}^{(2)} \oplus \dots \oplus \hat{\mathbf{A}}^{(k)}}$ can also be obtained from ${\mathbf{A}^{(1)} \oplus \mathbf{A}^{(2)} \oplus \dots \oplus \mathbf{A}^{(k)}}$ through two transformations of row multiplication on $i$ and $j$ by $\sqrt[k]{\prod_{\ell = 1}^n \alpha^{(\ell)}}$. Thus invariance to row multiplication of this particular weighting method $f$ results in
\[
\frac{f_i \left( {\hat{\mathbf{A}}^{(1)} \oplus \hat{\mathbf{A}}^{(2)} \oplus \dots \oplus \hat{\mathbf{A}}^{(k)}} \right)}{f_j \left( {\hat{\mathbf{A}}^{(1)} \oplus \hat{\mathbf{A}}^{(2)} \oplus \dots \oplus \hat{\mathbf{A}}^{(k)}} \right)} = \frac{f_i \left( {\mathbf{A}^{(1)} \oplus \mathbf{A}^{(2)} \oplus \dots \oplus \mathbf{A}^{(k)}} \right)}{f_j \left( {\mathbf{A}^{(1)} \oplus \mathbf{A}^{(2)} \oplus \dots \oplus \mathbf{A}^{(k)}} \right)} < 1,
\]
which implies the violation of $GCC$.

As an illustration of the proof above, take pairwise comparison matrix $\mathbf{B}$ from the counterexample of Lemma~\ref{Lemma43}. Here $2 \succ^{EM}_{\mathbf{B}} 1 \succ^{EM}_{\mathbf{B}} 3 \succ^{EM}_{\mathbf{B}} 4$, but $4 \succ^{EM}_{\mathbf{B}^-} 3 \succ^{EM}_{\mathbf{B}^-} 2 \succ^{EM}_{\mathbf{B}^-} 1$. Actually, $\max \{ w_m^{EM} \left( \mathbf{B}^- \right): 1 \leq m \leq 4 \} / w_2^{EM} \left( \mathbf{B}^- \right) \approx 7.8478$, so $\alpha = 9$ is an appropriate choice for row multiplications on $1$ and $2$:
\[
\hat{\mathbf{B}^-} = \left[
\begin{array}{K{1.5em} K{1.5em} K{1.5em} K{1.5em}}
    1     & 1     & 9     & 1     \\
    1     & 1     & 9/2   & 9/5   \\
    1/9   &  2/9  & 1     & 1/9   \\
    1     &  5/9  & 9     & 1     \\
\end{array}
\right]: \quad
\mathbf{w}^{EM} \left( \hat{\mathbf{B}^-} \right) = \left[
\begin{array}{c}
    0.3278 \\
    0.3349 \\
    0.0454 \\
    0.2920 \\
\end{array} 
\right];
\]
\[
\mathbf{B} \oplus \hat{\mathbf{B}^-} = \left[
\begin{array}{K{1.5em} K{1.5em} K{1.5em} K{1.5em}}
    1     & 1     & 3     & 3     \\
    1     & 1     & 3     & 3     \\
     1/3  &  1/3  & 1     & 1     \\
     1/3  &  1/3  & 1     & 1     \\
\end{array}
\right]: \quad
\mathbf{w}^{EM} \left( \mathbf{B} \oplus \hat{\mathbf{B}^-} \right) = \left[
\begin{array}{c}
    3/8 \\
    3/8 \\
    1/8 \\
    1/8 \\
\end{array} 
\right].
\]
Hence $2 \succ^{EM}_{\hat{\mathbf{B}^-}} 1 \succ^{EM}_{\hat{\mathbf{B}^-}} 4 \succ^{EM}_{\hat{\mathbf{B}^-}} 3$, but $2 \sim^{EM}_{\mathbf{B} \oplus \hat{\mathbf{B}^-}} 1$.
\end{proof}

\begin{corollary} \label{Cor41}
The Eigenvector Method may violate group-coherence for choice. % because it may fail inversion.
\end{corollary}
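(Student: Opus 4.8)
The plan is to read off the corollary from the structure already in place, since every ingredient has been proved. First I would note that the Eigenvector Method is a weighting method in the sense of Definition~\ref{Def25}, and that by Lemma~\ref{Lemma21} it satisfies invariance to row multiplication. This is exactly the hypothesis of Proposition~\ref{Prop42}, so for the Eigenvector Method aggregation invariance and group-coherence for choice are equivalent properties. Combining this with Proposition~\ref{Prop41}, which asserts that the Eigenvector Method may violate aggregation invariance, I conclude that it may violate group-coherence for choice as well. That is the entire argument in one line; the corollary is genuinely a corollary.

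For concreteness I would also exhibit an explicit witness, namely the pair $\mathbf{B}$ and $\hat{\mathbf{B}^-}$ constructed in the illustration at the end of the proof of Proposition~\ref{Prop42}. There alternative $2$ is the unique top element with respect to the Eigenvector Method in both matrices, since $2 \succ^{EM}_{\mathbf{B}} j$ for every $j \neq 2$ and $2 \succ^{EM}_{\hat{\mathbf{B}^-}} j$ for every $j \neq 2$, yet in the aggregate one has $2 \sim^{EM}_{\mathbf{B} \oplus \hat{\mathbf{B}^-}} 1$, so alternative $2$ does not end up strictly above every other alternative. This directly contradicts the strict clause of group-coherence for choice in Axiom~\ref{Axiom4}, and makes the negative result tangible rather than merely inferred.

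The only point that needs a word of care is the bookkeeping between the weighting-method formulation used in Proposition~\ref{Prop42} and the ranking-method formulation in which $GCC$ is stated: one has to pass through the canonical ranking induced by a weighting method, described after Definition~\ref{Def26}, under which $i \succeq^{EM}_\mathbf{A} j$ holds precisely when $w_i^{EM}(\mathbf{A}) \geq w_j^{EM}(\mathbf{A})$. Once this identification is made explicit, no further obstacle remains, and I would expect the proof to be at most a couple of sentences plus a pointer to the worked example.
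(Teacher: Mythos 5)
Your argument is exactly the paper's: it derives the corollary from the violation of aggregation invariance (Proposition~\ref{Prop41}, itself resting on Lemmata~\ref{Lemma41}--\ref{Lemma43}) combined with Lemma~\ref{Lemma21} and the equivalence of $AI$ and $GCC$ in Proposition~\ref{Prop42}, and your explicit witness $\mathbf{B}$, $\hat{\mathbf{B}^-}$ with $2 \sim^{EM}_{\mathbf{B} \oplus \hat{\mathbf{B}^-}} 1$ is precisely the illustration the paper builds at the end of the proof of Proposition~\ref{Prop42}. The proof is correct and essentially identical to the paper's.
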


\begin{proof}
It can be derived from Lemmata~\ref{Lemma41}, \ref{Lemma42} and \ref{Lemma43} (violation of aggregation invariance is a consequence of failing inversion) together with Lemma~\ref{Lemma21} and Proposition~\ref{Prop42} (violation of aggregation invariance is equivalent to failing group coherence for choice).
\end{proof}

The counterexample of \citet{PerezMokotoff2016} contains five alternatives. We have shown that the Eigenvector Method may violate $GCC$ even if $n=4$, but this example cannot be simplified with respect to the number of alternatives (see Lemma~\ref{Lemma44}).

\section{Conclusions} \label{Sec5}

We have discussed a powerful argument against the use of Eigenvector Method in group decision-making: it may happen that the relative priorities derived from the aggregated pairwise comparison matrix do not reflect the common individual preferences even if there are only four alternatives and all decision-makers provide a pairwise comparison matrix with an acceptable level of inconsistency. This negative result is proved to be a consequence of the right-left asymmetry of eigenvectors, the possibly different ranking of alternatives from a given pairwise comparison matrix and its tranpose.

Investigation of other ranking (weighting) methods with respect to these axioms remains a topic of future research.

\section*{Acknowledgements}
\addcontentsline{toc}{section}{Acknowledgements}
\noindent
I am grateful to S\'andor Boz\'oki for useful advices. \\
Three anonymous reviewers provided valuable comments and suggestions on earlier drafts. \\
The research was supported by OTKA grant K 111797 and by the MTA Premium Post Doctorate Research Program. \\
This research was partially supported by Pallas Athene Domus Scientiae Foundation. The views expressed are those of the author's and do not necessarily reflect the official opinion of Pallas Athene Domus Scientiae Foundation. %\\
%This research was partially supported by Pallas Athene Domus Scientiae Foundation. The views expressed are those of the author's and do not necessarily reflect the official opinion of Pallas Athene Domus Scientiae Foundation.

%\bibliographystyle{apalike}
%\bibliographystyle{unsrtnat}
%\bibliography{All_references}
%\addcontentsline{toc}{section}{References}

\end{document}